\theoremstyle{plain} 
\newtheorem{theorem}{Theorem}[section]
\newtheorem{lemma}[theorem]{Lemma}
\newtheorem{corollary}[theorem]{Corollary}
\newtheorem{proposition}[theorem]{Proposition}
\theoremstyle{definition} 
\newtheorem{definition}[theorem]{Definition}
\theoremstyle{remark} 
\newtheorem{remark}[theorem]{Remark}
\newcommand{\mcJ}{\mathcal{J}}
\newcommand{\bbC}{\mathbb{C}}
\newcommand{\bbCP}{\mathbb{CP}}
\newcommand{\bbN}{\mathbb{N}}
\newcommand{\bfa}{\mathbf{a}}
\newcommand{\bfz}{\mathbf{z}}
\newcommand{\bfm}{\mathbf{m}}
\newcommand{\Rat}{\mathrm{Rat}}
\newcommand{\PSL}{\mathrm{PSL}}
\begin{document}
\title{Algebraic independence of multipliers of periodic orbits in the space of rational maps of the Riemann sphere}
\author{Igors Gorbovickis}
\maketitle

\begin{abstract}
We consider the space of degree $n\ge 2$ rational maps of the Riemann sphere with $k$ distinct marked periodic orbits of given periods. First, we show that this space is irreducible. For $k=2n-2$ and with some mild restrictions on the periods of the marked periodic orbits, we show that the multipliers of these periodic orbits, considered as algebraic functions on the above mentioned space, are algebraically independent over~$\bbC$. Equivalently, this means that at its generic point, the moduli space of degree $n$ rational maps can be locally parameterized by the multipliers of any $2n-2$ distinct periodic orbits, satisfying the above mentioned conditions on their periods.  
This work extends previous similar result obtained by the author for the case of complex polynomial maps.
\end{abstract}

\section{Introduction}
Let $\Rat_n$ denote the space of degree $n$ rational maps of the Riemann sphere. The moduli spaces of degree $n$ rational maps $M_n$ is the space $\Rat_n$ modulo the action by conjugation of the group of M\"obius transformations,
$$
M_n=\Rat_n/\PSL_2(\bbC).
$$  
A key point in studying the moduli spaces $M_n$ is the choice of a parameterization. The idea of using the multipliers of the fixed points of a map as the parameters of the moduli space appears naturally in many works on the subject. Notably, in~\cite{Milnor_M2} J.~Milnor used the multipliers of the fixed points to parameterize the moduli space of degree $2$ rational maps. Using this parameterization he proved that this moduli space is isomorphic to $\bbC^2$.

It is not hard to see that the same approach does not work for $n\ge 3$, since a degree $n$ rational map does not have enough fixed points. Indeed, $\dim M_n=2n-2$, while a map $f\in\Rat_n$ has only $n+1$ fixed points (counted with multiplicity). In addition to that, the multipliers of these $n+1$ fixed points satisfy a certain relation, namely, the holomorphic index formula (see~\cite[Section~12]{Milnor}), hence they cannot be independent parameters.

In order to overcome this difficulty, instead of the multipliers at the fixed points one can try to use the multipliers of periodic orbits as the local parameters on the moduli space $M_n$. It is not hard to see that the map from $M_n$ to the multipliers of the chosen periodic orbits is defined in a neighborhood of a generic point of $M_n$. The main difficulty is to check whether this map is a local diffeomorphism, when the number of the chosen periodic orbits is equal to the dimension of $M_n$. Since multipliers are (multiple valued) algebraic maps on $\Rat_n$, this leads to the question whether there exist ``hidden'' algebraic relations between the multipliers of the chosen periodic orbits. 
In other words, are the chosen multipliers algebraically independent over~$\bbC$, if we view those multipliers as (multiple valued) functions on $\Rat_n$?

In~\cite{McMullen} McMullen proved that if $n\ge 2$ then, except for the flexible Latt\`{e}s maps, an element of $M_n$ is determined up to finitely many choices by the multipliers of \textit{all} of its periodic orbits. This implies that one can always choose $2n-2=\dim(M_n)$ distinct periodic orbits whose multipliers, considered as (multiple valued) functions on $\Rat_n$, are algebraically independent over~$\bbC$.

In this paper we prove the following theorems:
\begin{theorem}\label{main1}
For $n=2$, the multipliers of any two distinct periodic orbits considered as (multiple valued) algebraic functions on $\Rat_2$, are algebraically independent over $\bbC$. In other words, they do not satisfy any polynomial relation with complex coefficients.
\end{theorem}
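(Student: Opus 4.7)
The plan is to use Milnor's theorem~\cite{Milnor_M2} identifying $M_2$ with $\bbC^2$ via the pair $(\sigma_1, \sigma_2)$ of elementary symmetric functions of the three fixed-point multipliers (subject to $\sigma_3 = \sigma_1 - 2$ from the holomorphic index formula), and then reduce algebraic independence to a rank condition on a Jacobian matrix. Under Milnor's identification, the multiplier $\lambda_O$ of any periodic orbit $O$ becomes an algebraic function of $(\sigma_1, \sigma_2)$, and algebraic independence of $\lambda_{O_1}, \lambda_{O_2}$ over $\bbC$ is equivalent to the map $[f] \mapsto (\lambda_{O_1}([f]), \lambda_{O_2}([f]))$ from $M_2$ to $\bbC^2$ being dominant; equivalently, to $d\lambda_{O_1}$ and $d\lambda_{O_2}$ being linearly independent at a single point of $M_2$. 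Thus the whole statement reduces to finding one conjugacy class at which a $2 \times 2$ Jacobian determinant does not vanish.

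If both orbits have period one, the argument is essentially immediate: the three fixed-point multipliers are the roots of $T^3 - \sigma_1 T^2 + \sigma_2 T - (\sigma_1 - 2)$, and away from the discriminant locus Vieta's formulas invert to recover $(\sigma_1, \sigma_2)$ from any two of them, making any pair of fixed-point multipliers local coordinates on $M_2$. For the general case, where at least one of $O_1, O_2$ has period $p \ge 2$, I would compute the Jacobian at an explicit test class such as a quadratic polynomial $f_c(z) = z^2 + c$, corresponding to the slice $\sigma_1 = 2$ in Milnor's coordinates. The derivative along the polynomial slice is obtained by implicit differentiation of the dynatomic polynomial $\Phi_p(z, c) \in \bbZ[z,c]$, whose roots are the period-$p$ points of $f_c$. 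The transverse derivative (varying $\sigma_1$ with $\sigma_2$ fixed) requires perturbing $f_c$ off the polynomial locus and applying the standard variational formula $\dot{z}_0 = \dot{f}(z_0)/(1 - (f^p)'(z_0))$ for the motion of a periodic point under a one-parameter deformation of the map, combined with the product rule for $\lambda_O = \prod_{i=0}^{p-1} f'(z_i)$.

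The main obstacle is guaranteeing that the resulting $2 \times 2$ Jacobian is nonzero for \emph{every} choice of two distinct orbits. The most delicate subcase is when $O_1$ and $O_2$ have the same period $p$: then both multipliers are roots of the same univariate multiplier polynomial obtained from $\Phi_p$, and in order to transfer a Jacobian computation for one ordered pair of orbits to an arbitrary ordered pair of orbits, I would need to know that the monodromy of the cover of $M_2$ parameterizing ordered period-$p$ orbits acts with enough transitivity. Establishing the required monodromy statement, together with the explicit Jacobian computation at a well-chosen specialization (e.g. a $c$ with no accidental symmetric coincidences between orbits and for which multipliers can be evaluated in closed form), is where I expect the work to concentrate.
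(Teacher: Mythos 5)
Your overall reduction --- pass to the quotient $M_2\cong\bbC^2$, show the pair of multipliers is a dominant map by exhibiting one point with non-degenerate Jacobian --- is sound in outline, and the transitivity issue you flag at the end is genuinely needed and genuinely true: it is exactly the content of the paper's Lemma~2.3, which shows that the space of maps with marked periodic orbits of prescribed periods is an irreducible cover of $\Rat_2$ (proved by degenerating to $z\mapsto z^n$ and invoking the analogous statement for polynomials). So that part of your plan can be completed, and your period-one case via Vieta is also fine.

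The genuine gap is the core non-vanishing claim. You propose to verify that the $2\times 2$ Jacobian is nonzero by explicit differentiation of dynatomic polynomials at some $z^2+c$, but you give no mechanism that works uniformly in the periods $(m_1,m_2)$: for each pair of periods you would need a fresh specialization of $c$ and a fresh closed-form evaluation, and nothing in the proposal controls these computations for arbitrary period $p$. The paper sidesteps all computation with a soft argument: by quasiconformal surgery, the multipliers of \emph{attracting} periodic orbits are automatically locally independent parameters (this is the standard parameterization of hyperbolic components), so it suffices to produce one quadratic rational map with two distinct attracting cycles of periods $m_1$ and $m_2$. Such a map is obtained as a mating of $z^2+c_1$ and $z^2+c_2$ with attracting cycles of the required periods and $c_1,c_2$ not in conjugate limbs of the Mandelbrot set (Tan Lei, Shishikura). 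The only pair this excludes is $(m_1,m_2)=(2,2)$, which is vacuous because a quadratic rational map has at most one orbit of period~$2$ --- an observation your proposal would also need but does not make. Unless you can actually carry out the Jacobian evaluation for all periods, I would replace that step with the attracting-orbit/mating argument.
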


\begin{theorem}\label{main2}
For $n\ge 3$, the multipliers of any $2n-2$ distinct periodic orbits, considered as (multiple valued) algebraic functions on $\Rat_n$, are algebraically independent over~$\bbC$, if  the following two conditions are simultaneously satisfied:

(i) no more than $n$ of these orbits have period $1$,

(ii) at least one of these orbits has period greater than $2$ and periods of all other orbits are not equal to $2$.
\end{theorem}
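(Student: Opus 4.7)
The plan is to view the multipliers $\lambda_1, \dots, \lambda_{2n-2}$ of the marked periodic orbits as holomorphic functions on the marked moduli space of degree $n$ rational maps carrying $2n-2$ marked periodic orbits of the specified periods. By the irreducibility of this marked space announced in the abstract, algebraic independence of these multipliers over $\bbC$ is equivalent to dominance of the multiplier map $\mu = (\lambda_1, \dots, \lambda_{2n-2})$ from this $(2n-2)$-dimensional marked space to $\bbC^{2n-2}$, which in turn reduces to exhibiting a single point at which the Jacobian of $\mu$ attains maximal rank $2n-2$.

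The natural test point is a polynomial map of degree $n$, regarded as an element of $\Rat_n$ having $\infty$ as a superattracting fixed point. The advantage of this choice is that the author's previous theorem for polynomials provides algebraic independence of multipliers of any $n-1$ suitably chosen periodic orbits on the $(n-1)$-dimensional polynomial moduli space. Conditions (i) and (ii) enter here decisively: (i) bounds the number of marked fixed-point orbits by $n$, ensuring that enough of the orbits have period $\ge 2$ and survive in the finite plane under the polynomial degeneration; (ii) supplies the period-pattern genericity needed to invoke the polynomial theorem while ruling out the symmetric period-$2$ configurations in which accidental cancellations can occur.

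With this setup, the Jacobian of $\mu$ at a polynomial test point $f_0$ acquires a natural block decomposition: an $(n-1) \times (n-1)$ ``polynomial block'' recording derivatives of a well-chosen subfamily of $n-1$ multipliers along directions tangent to the polynomial subvariety, and an $(n-1) \times (n-1)$ ``transverse block'' recording derivatives of the remaining $n-1$ multipliers along one-parameter families of rational deformations that break the superattracting fixed point at $\infty$, for instance by placing a small pole near $\infty$. Non-degeneracy of the polynomial block follows directly from the polynomial theorem. The transverse derivatives themselves can be written down explicitly by applying the chain rule to $\lambda_j = (f^{p_j})'(z_j)$ together with the implicit-function-theorem variation of the orbit points as $f$ is perturbed.

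The main obstacle is establishing non-degeneracy of the transverse block in full generality, since its entries depend on the combinatorics of the chosen orbits in an intricate way. I expect to handle this by letting $f_0$ range over the polynomial moduli space and invoking a Zariski-density argument: if the determinant of the transverse block vanished for every admissible polynomial test point, it would vanish identically as an algebraic function on polynomial moduli, and this can be ruled out by an explicit leading-order computation at a degenerate polynomial limit such as $z \mapsto z^n$ with marked cycles tracked by a small perturbation parameter. Putting the two block determinants together yields maximal rank of the total Jacobian of $\mu$, completing the proof.
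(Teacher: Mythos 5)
Your overall reduction is the same as the paper's: irreducibility of the marked space $N^n_\bfm$ plus the observation that algebraic independence follows from exhibiting one map at which the Jacobian of the multiplier map has full rank (this is Proposition~\ref{suf_cond_prop}), and your ultimate test point $z\mapsto z^n$ is exactly the one the paper uses. However, the proposal has a genuine gap precisely where you locate ``the main obstacle.'' First, the block decomposition of the Jacobian is not justified: the determinant of a $2\times 2$ block matrix equals the product of the diagonal block determinants only if the matrix is block triangular, i.e.\ you must show that the multipliers in the ``polynomial block'' have vanishing derivatives along the transverse (pole-creating) deformations, or vice versa. This is false in general: at $f_0(z)=z^n$ the explicit formula $\frac{d\lambda_{z_0}}{da_j}(0)=(jn^{m-1}-n^m)\sum_{i=0}^{m-1}z_0^{n^i(j-n)}$ (Lemma~\ref{comp_deriv_lemma}) is nonzero for generic $z_0$ for \emph{all} admissible $j$, both numerator and denominator directions, so no off-diagonal block vanishes. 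The paper avoids any block structure and instead runs an induction on the leading principal minors $\mcJ_\bfz(j)$, choosing the periodic points $z_0,\dots,z_{2n-1}$ one at a time so that a cofactor expansion along the newly added row is nonzero.

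Second, the nondegeneracy you defer to ``an explicit leading-order computation at $z\mapsto z^n$'' is not a finishing touch but the entire content of Sections~\ref{deriv_sec}--\ref{induct_sec}: one needs (a) the exact polynomials $P_{n,j,m}$ with $\frac{d\lambda_{z_0}}{da_j}(0)=z_0^{-n^{m-1}}P_{n,j,m}(z_0)$, (b) the fact that distinct $P_{n,j_1,m}$ and $P_{n,j_2,m}$ share no monomial degrees --- which is exactly where the hypotheses $m\ge 2$ and $m\ge 3$, i.e.\ your condition (ii), enter, not through any ``symmetric period-$2$ cancellation'' --- and (c) the count $\deg P<\hat\nu_n(m)$ guaranteeing a periodic point where the relevant linear combination of cofactors is nonzero. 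Likewise condition (i) does not enter as you describe: it is used to reorder the periods so that $m_{n+1},\dots,m_{2n-3}\ge 2$ (the degree bounds fail for fixed points in the denominator directions), with the possible $n$-th fixed orbit placed at $\infty$ and handled separately via $\frac{d\lambda_\infty}{da_{2n-1}}(0)=-1$ (Lemma~\ref{diff_inf_lemma}). Without these computations and the induction that assembles them, the proof is not complete.
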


\begin{remark}
If the first condition in Theorem~\ref{main2} is not satisfied then this means that the chosen collection of periodic orbits contains $n+1$ fixed points, whose multipliers are related by the holomorphic index formula and hence, cannot be independent. Thus, the first condition in Theorem~\ref{main2} cannot be removed. On the other hand, we conjecture that the second condition in Theorem~\ref{main2} can be significantly weakened or even completely removed.
\end{remark}

As an important corollary for the theory of rational maps, we deduce existence of hyperbolic components, characterized by $2n-2$ attracting periodic orbits with periods satisfying Theorem~\ref{main1} or Theorem~\ref{main2}.

\begin{corollary}
For every tuple of $2n-2$ periods, such that either $n=2$, or $n\ge 3$ and the two conditions of Theorem~\ref{main2} are satisfied, there exists a hyperbolic component in the moduli space $M_n$, characterized by $2n-2$ attracting periodic orbits of the given periods. 
\end{corollary}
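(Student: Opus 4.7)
The plan is to use the algebraic independence of the multipliers (Theorem~\ref{main1} or Theorem~\ref{main2}) to produce a single rational map whose $2n-2$ marked periodic orbits are all attracting, and then to invoke the critical-point count to upgrade this map to a hyperbolic one.

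First I would recast the multipliers as a dominant algebraic map. On the (irreducible, by the first result announced in the paper) marked cover $\widetilde{\Rat}_n$ of $\Rat_n$ parameterizing degree $n$ rational maps together with the $2n-2$ marked periodic orbits of the prescribed periods, the multipliers become single-valued algebraic functions and assemble into a $\PSL_2(\bbC)$-invariant morphism
\[
\Lambda\colon \widetilde{\Rat}_n \longrightarrow \bbC^{2n-2}.
\]
The corresponding marked quotient has the same dimension $2n-2$ as the target, so the algebraic independence of the coordinates of $\Lambda$ over $\bbC$ forces the induced map on the marked moduli space to be dominant. By Chevalley's theorem its image is constructible and Zariski-dense in $\bbC^{2n-2}$, and hence contains a non-empty Zariski-open set $U$.

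Second I would extract a hyperbolic point. The Euclidean-open unit polydisc $\Delta^{2n-2}\subset\bbC^{2n-2}$ must meet the Zariski-open set $U$, and any preimage $f\in\widetilde{\Rat}_n$ of a point of $U\cap\Delta^{2n-2}$ is a rational map carrying $2n-2$ attracting periodic orbits of the prescribed periods. A degree $n$ rational map has exactly $2n-2$ critical points by Riemann--Hurwitz, and by a classical theorem of Fatou the immediate basin of every attracting cycle contains at least one critical point. Hence the $2n-2$ attracting basins of $f$ must absorb every critical orbit, so $f$ is hyperbolic. Since hyperbolicity and the presence of attracting cycles of the prescribed periods are open conditions in $M_n$, the class of $f$ in $M_n$ lies in a hyperbolic component of the desired type.

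The step I expect to be the main obstacle is the first one: carefully turning the algebraic independence of the multiple-valued multipliers on $\Rat_n$ into genuine dominance of the single-valued map $\Lambda$ on the marked cover. This requires precisely the irreducibility of $\widetilde{\Rat}_n$ established in the first part of the paper, together with a clean comparison of dimensions; once these are in place, dominance is routine. The remaining pieces---Chevalley's theorem, the Riemann--Hurwitz count of critical points, Fatou's theorem, and the openness of hyperbolicity---are standard and slot together immediately to yield the hyperbolic component.
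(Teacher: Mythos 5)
Your proposal is correct and follows essentially the same route as the paper: algebraic independence makes the multiplier map dominant, so its image contains a Zariski-open set meeting the unit polydisc, yielding a map with $2n-2$ attracting cycles of the prescribed periods and hence a hyperbolic component. The paper's own proof is just a terser version of this, leaving the final Riemann--Hurwitz/Fatou critical-point count implicit where you spell it out.
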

\begin{proof}
If the considered $2n-2$ multipliers are algebraically independent, then the algebraic map from $\Rat_n$ to these multipliers maps $\Rat_n$ to a Zariski open subset of $\bbC^{2n-2}$. In particular, this means that there exists a map $f\in\Rat_n$ with $2n-2$ attracting periodic orbits of considered periods. This implies the existence of the corresponding hyperbolic component.
\end{proof}

Finally, we mention that for the case of polynomial maps the theorem analogous to Theorems~\ref{main1} and~\ref{main2} is proved by the author in~\cite{mult_n}. This theorem states that for $n\ge 3$, the multipliers of any $n-1$ distinct periodic orbits considered as algebraic functions on the space of all degree~$n$ polynomials, are algebraically independent over~$\bbC$. Unlike Theorem~\ref{main2}, the above formulated theorem does not have any additional restrictions on the periods of the orbits, and since the moduli space of degree~$n$ polynomials has dimension $n-1$, this theorem completely answers the question, which collections of orbits have independent multipliers in the space of polynomial maps.

\subsection{Outline of the proof}
We prove Theorem~\ref{main1} and Theorem~\ref{main2} in the following way: we consider the space of degree $n$ rational maps with $2n-2$ distinct marked periodic orbits of given periods. This space is a ramified cover over the space $\Rat_n$ of all degree $n$ rational maps. First, in Section~\ref{marked_sp_sec} we prove that this space is an irreducible algebraic set. The multipliers that we consider, are algebraic functions on this set. Then in subsequent sections we show that under conditions of Theorem~\ref{main1} or Theorem~\ref{main2} there exists a point in this set such that the differentials of the multipliers at this point are linearly independent, which implies the desired algebraic independence of the multipliers. In the case of Theorem~\ref{main1}, the rational map that corresponds to this point, is constructed using matings. In the case of Theorem~\ref{main2} we show that the corresponding rational map can be $f_0(z)=z^n$.


\section{The space of polynomials with $k$ marked periodic orbits}\label{marked_sp_sec}

For $n\ge 1$, there is a natural injective map from $\Rat_n$ to $\bbCP^{2n-1}$ defined so that if
$$
f(z)=\frac{p(z)}{q(z)}=\frac{a_nz^n+\dots+a_0}{b_nz^n+\dots+b_0},
$$
then $f$ is mapped to $(a_0:\dots:a_n:b_0:\dots:b_n)\in\bbCP^{2n-1}$. The image of this map is the complement of a certain hypersurface $R_n$ in the projective space $\bbCP^{2n-1}$, where $R_n$ is the zero locus of the resultant of $p$ and $q$. Thus the space $\Rat_n$ can be identified with a Zariski open subset $\bbCP^{2n-1}\setminus R_n$ of the projective $(2n-1)$-space, hence, is an irreducible quasiprojective variety.

For a positive integer $k$, consider a rational map $f\in\Rat_n$ and its $k$ non-multiple periodic points $z_1,\dots,z_k\in\hat\bbC$ belonging to different periodic orbits of (minimal) periods $m_1,\dots,m_k$ respectively. By $\bfm$ denote the vector of periods 
$$
\bfm=(m_1,\dots,m_k).
$$
With any such rational map $f$ and its periodic points belonging to different periodic orbits, one can associate the set $N^n_\bfm$ defined in the following way:

\begin{definition}\label{Nnm_def}
The set $N^n_\bfm=N^n_\bfm(f,z_1,\dots,z_k)$ is the maximal irreducible analytic subset of $\Rat_n\times\hat\bbC^k$, such that

(i) $(f,z_1,\dots,z_k)\in N^n_\bfm$;

(ii) For $(g,w_1,\dots,w_k)\in N^n_\bfm$, the points $w_1,\dots,w_k$ satisfy the equations $g^{\circ m_j}(w_j)=w_j$, for any $j=1,2,\dots,k$.
\end{definition}

Let $\pi\colon N^n_\bfm\to \Rat_n$ be the natural projection
$$
\pi\colon (q, w_1, \dots,w_k)\mapsto q.
$$


\begin{remark}\label{irred_rem}
Since the relations in condition (ii) of Definition~\ref{Nnm_def} are essentially polynomial, it follows that together with the projection $\pi$ the set $N^n_\bfm=N^n_\bfm(f,z_1,\dots, z_k)$ is a ramified cover over $\Rat_n$ and is an irreducible quasiprojective variety. 
\end{remark}

A priori it is not obvious whether the sets $N^n_\bfm$ can be different for different initial choices of $(f, z_1,\dots, z_k)$. We will prove the following lemma, which says that all these sets are the same. 
\begin{lemma}\label{Nnkm_lemma}
Assume that $n\ge 2$. 
Then the set $N^n_\bfm$ is completely determined by the integer $n$, and the vector $\bfm$. The set $N^n_\bfm$ can be described as the closure in $\Rat_n\times\hat\bbC^k$ of the set of all points $(f,z_1,\dots,z_k)\in\Rat_n\times\hat\bbC^k$, where $f\in\Rat_n$ and all $z_j$ are non-multiple periodic points of $f$, belonging to different periodic orbits of corresponding periods $m_j$.
\end{lemma}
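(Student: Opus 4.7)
The plan is to reformulate the lemma as the irreducibility of a certain explicit open subvariety of $V=\{(g,w_1,\ldots,w_k)\in\Rat_n\times\hat\bbC^k : g^{\circ m_j}(w_j)=w_j\text{ for all }j\}$, and then to prove this irreducibility by a monodromy argument.

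Let $U\subset V$ be the Zariski-open subset consisting of tuples $(g,w_1,\ldots,w_k)$ for which each $w_j$ is a non-multiple periodic point of exact period $m_j$ and the corresponding $k$ orbits are pairwise disjoint. By Remark~\ref{irred_rem}, for any starting point $(f,z_1,\ldots,z_k)\in U$ the set $N^n_\bfm(f,z_1,\ldots,z_k)\subset V$ is irreducible of dimension $\dim\Rat_n$ and meets $U$ in a nonempty Zariski-open subset. Consequently the lemma reduces to showing that $U$ lies in a single irreducible component of $V$, equivalently that $U$ itself is irreducible, in which case $\overline U$ is this common irreducible component and equals $N^n_\bfm$ independently of the initial choice.

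At every point of $U$ the condition that $w_j$ is non-multiple says exactly that the derivative of $g^{\circ m_j}(w)-w$ in $w$ at $w=w_j$ is nonzero, so by the implicit function theorem $U$ is a smooth complex manifold and the projection $\pi\colon U\to\Rat_n$ is an unramified holomorphic covering onto its image. The image $\pi(U)$ is the complement in $\Rat_n$ of a proper algebraic subset (where some marked orbit becomes multiple, degenerates to lower period, or collides with another marked orbit), and so $\pi(U)$ is connected because $\Rat_n$ is irreducible. Since a smooth complex variety is irreducible if and only if it is connected, it suffices to show that the finite étale cover $\pi\colon U\to\pi(U)$ is itself connected, i.e.\ that the monodromy action of $\pi_1(\pi(U))$ on a typical fiber of $\pi$ is transitive.

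To prove transitivity I would fix a convenient basepoint $g_0\in\pi(U)$ and exhibit explicit loops in $\pi(U)$ whose monodromies generate a transitive subgroup of the symmetric group on the fiber. Two kinds of generators are required: loops that cyclically shift the marked representative within a single chosen orbit (obtained from small loops encircling the parabolic hypersurface where the multiplier of that orbit equals~$1$ and all $m_j$ orbit points collide), and loops that interchange two distinct orbits of the same period $m_j$ (obtained from small loops encircling a codimension-one stratum of the discriminant where two such orbits temporarily coalesce and then separate with their labels exchanged). Combining these generators across all indices $j$ and all pairs of orbits of equal period realises every permutation of the fiber compatible with the tuple $(m_1,\ldots,m_k)$, and in particular yields transitivity. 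The main obstacle is the explicit local analysis at these bifurcation strata, especially the verification that the orbit-collision stratum actually exists as a codimension-one stratum for every admissible $\bfm$ and that its local monodromy is a genuine transposition rather than the identity; this amounts to a local normal-form computation analogous to the one used by Bousch and by Lau--Schleicher in their proofs of irreducibility of individual dynatomic curves, and once it is in place the remainder of the argument is formal and completes the proof of the lemma.
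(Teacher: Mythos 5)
Your reduction of the lemma to the transitivity of the monodromy action of $\pi_1(\pi(U))$ on a fiber of the covering $\pi\colon U\to\pi(U)$ is correct and is the right framework; the reformulation via connectedness of the smooth variety $U$ is fine. However, the proof has a genuine gap at exactly the point where all the content of the lemma lives: you assert that there exist loops realising a cyclic shift within a marked orbit and loops realising a transposition of two orbits of equal period, but you explicitly defer the verification that the relevant bifurcation strata exist in codimension one for every admissible $\bfm$ and that their local monodromy is a genuine transposition rather than the identity. Without that local normal-form analysis the argument does not establish transitivity, and this analysis is not routine --- it is the technical heart of the Bousch and Lau--Schleicher irreducibility proofs you cite. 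A further point you do not address is that a single loop in $\Rat_n$ acts simultaneously on \emph{all} periodic points of all periods, so one must check that the permutations you produce can be combined to act transitively on the set of admissible $k$-tuples, not merely on individual orbits of a fixed period.

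The paper avoids this local analysis entirely by reducing to a result already proved in the polynomial setting. It connects $f$ by a path $\gamma_1$ in $\Rat_n$ to the polynomial $f_0(z)=z^n$, transports both tuples of periodic points along $\gamma_1$, and then invokes Lemma~1.5 of~\cite{mult_n}, which supplies a loop in the space of degree~$n$ polynomials realising the required rearrangement of the (finite) periodic points of period greater than~$1$; the fixed points are handled separately and elementarily by observing that they are the roots of the degree~$(n+1)$ polynomial $zq(z)-p(z)$, and that any permutation of the roots of a polynomial is realised by a loop in the space of polynomials, which lifts back to $\Rat_n$. If you want to complete your own route, you must actually carry out the local monodromy computation at the collision strata; otherwise the cleanest fix is to follow the paper and reduce to the known polynomial case.
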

\begin{proof}
Consider a rational map $f\in\Rat_n$ that does not have multiple periodic orbits. Given $\bfm=(m_1,\dots,m_k)$, let $z_1,\dots,z_k$ be periodic points of $f$ belonging to different periodic orbits of corresponding periods $m_1,\dots,m_k$. 
It follows from Definition~\ref{Nnm_def} and Remark~\ref{irred_rem} that in order to prove Lemma~\ref{Nnkm_lemma}, it is sufficient to show that every tuple of periodic points $w_1,\dots,w_k$ that belong to different periodic orbits of $f$ with corresponding periods $m_1,\dots, m_k$, can be obtained by analytic continuation of the periodic points $z_1,\dots, z_k$ along some loop $\gamma\subset\Rat_n$. 

We deduce the existence of such a loop $\gamma$ from a similar result (Lemma~1.5 from~\cite{mult_n}) in which rational maps are substituted by polynomials. A minor difficulty is that Lemma~1.5 from~\cite{mult_n} deals with analytic continuation of finite periodic points, while we allow the points $z_j$ and $w_j$ to be infinite. On the other hand, for every non-constant polynomial, infinity is a periodic point of period~$1$, which means that we need to separately consider analytic continuations of fixed points while for periodic points of higher period we can still apply Lemma~1.5 from~\cite{mult_n}. 

As it was mentioned above, first, we prove the existence of the loop $\gamma$ for the case when $\bfm=(1,\dots,1)$. After conjugation by an appropriate M\"{o}bius transformation we may assume that infinity is not a fixed point of $f$. Since $f(z)=p(z)/q(z)$ is a rational map of degree $n$ without multiple periodic orbits, it has precisely $n+1$ fixed points (so $k$ cannot be greater than $n+1$) that are roots of the degree $n+1$ polynomial $zq(z)-p(z)$. Any permutation of these roots can be obtained by analytic continuation along an appropriate loop $\hat\gamma$ in the space of degree $n+1$ polynomials. From such a loop one can reconstruct a loop $\gamma$ in the space $\Rat_n$ that is mapped bijectively onto $\hat{\gamma}$ by the map $p(z)/q(z)\mapsto zq(z)-p(z)$. This way we realize any permutation of the fixed points of $f$.

Now we consider the case of an arbitrary vector $\bfm$. If some of the entries of $\bfm$ are equal to $1$, which corresponds to fixed points, then, as explained in the previous paragraph, we can make a loop in $\Rat_n$ that brings the fixed points from the set $\{z_1,\dots,z_k\}$ to the corresponding fixed points from the set $\{w_1,\dots,w_k\}$. Thus, without loss of generality we may assume that if a point $z_j$ is a fixed point of $f$, then $z_j=w_j$.

We choose a curve $\gamma_1\subset\Rat_n$ that connects $f$ with the polynomial $f_0(z)=z^n$. Analytic continuation of periodic points $z_1,\dots,z_k$ and $w_1,\dots,w_k$ along $\gamma_1$ brings them to corresponding periodic points $z_1',\dots,z_k'$ and $w_1',\dots,w_k'$ of $f_0$. Notice that the points $z_1',\dots,z_k'$ belong to different periodic orbits and similarly the points $w_1',\dots,w_k'$ also belong to different periodic orbits. Moreover, all periodic points of $f_0$ of period greater than~$1$, are finite, so according to Lemma~1.5 from~\cite{mult_n}, there exists a loop $\gamma_2$ in the space of degree $n$ polynomials that begins and ends at the polynomial $f_0$, and analytic continuation of periodic points along $\gamma_2$ brings $z_1',\dots,z_k'$ to the corresponding points $w_1',\dots,w_k'$. Now the loop $\gamma$ can be constructed from $\gamma_1$ and $\gamma_2$ first by going along $\gamma_1$, then along $\gamma_2$, and then returning to $f$ along $-\gamma_1$.
\end{proof}

\subsection{The multiplier map}



Lemma~\ref{Nnkm_lemma} implies that given a vector of periods $\bfm=(m_1,\dots,m_{k})$, the multipliers of all tuples of $k$ distinct periodic orbits with corresponding periods $m_1,\dots,m_k$ are simultaneously either algebraically independent or algebraically dependent over~$\bbC$. This statement together with a sufficient condition for algebraic independence is formulated in the following proposition.
\begin{proposition}\label{suf_cond_prop}
For $n\ge 2$, let $\bfm=(m_1,\dots,m_{k})$ be the vector of periods. If there exists a rational map $g\in\Rat_n$ with $k$ non-multiple periodic points of corresponding periods $m_1,\dots,m_k$, such that the multipliers of these periodic points considered as algebraic functions on $\Rat_n$, are locally independent at $g$, then the multipliers of any $k$ distinct periodic orbits with corresponding periods $m_1,\dots,m_{k}$ considered as (multiple valued) functions on $\Rat_n$, are algebraically independent over~$\bbC$.
\end{proposition}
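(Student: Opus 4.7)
The plan is to translate the statement into a question about the dominance of a single algebraic map on the irreducible variety $N^n_\bfm$, and then to extract this dominance from the local hypothesis at $g$. Define the multiplier map
$$
\mu\colon N^n_\bfm \to \bbC^k, \qquad \mu(f, w_1, \dots, w_k) = \bigl((f^{\circ m_1})'(w_1), \dots, (f^{\circ m_k})'(w_k)\bigr),
$$
where each coordinate is understood via a local chart of $\hat\bbC$ near $w_j$ and is therefore a regular algebraic function on $N^n_\bfm$. By Remark~\ref{irred_rem}, $N^n_\bfm$ is an irreducible quasiprojective variety, so the Zariski closure of $\mu(N^n_\bfm)$ is an irreducible algebraic subset of $\bbC^k$; the $k$ coordinate multipliers are algebraically independent over $\bbC$ if and only if this closure is all of $\bbC^k$, i.e.\ iff $\mu$ is dominant.

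Next, I would use Lemma~\ref{Nnkm_lemma} to remove the dependence on the initial choice of orbits. Any tuple consisting of a rational map with $k$ distinct periodic orbits of periods $\bfm$ produces a point of the one and the same irreducible variety $N^n_\bfm$. Hence the statement that the multipliers of \emph{some particular} such tuple are algebraically independent (as multivalued algebraic functions on $\Rat_n$) is equivalent to the intrinsic statement that $\mu$ is dominant on $N^n_\bfm$, and thus does not depend on the choice of tuple. In particular, to prove the proposition it suffices to exhibit one point of $N^n_\bfm$ at which $d\mu$ has rank~$k$.

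Finally, I would extract such a point from the hypothesis. Let $p = (g, z_1, \dots, z_k) \in N^n_\bfm$ be the point supplied by the assumption. Since each $z_j$ is a non-multiple periodic point of $g$, the derivative of $g^{\circ m_j}(z) - z$ at $z_j$ is non-zero, so the implicit function theorem implies that the projection $\pi\colon N^n_\bfm \to \Rat_n$ is a local biholomorphism at $p$. Under this identification, the coordinate functions of $\mu$ pull back to the local branches at $g$ of the multiplier functions on $\Rat_n$ associated to the chosen periodic points, and these are linearly independent at $g$ by hypothesis. Therefore $d\mu_p$ has rank $k$, $\mu$ is dominant, and the multipliers of any $k$ distinct periodic orbits of periods $\bfm$ are algebraically independent over~$\bbC$.

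The main subtlety lies in matching the two notions of ``independence'': the local independence of multivalued algebraic functions on $\Rat_n$ at $g$, as given in the hypothesis, versus the rank of the differential of the single-valued map $\mu$ on $N^n_\bfm$. The non-multiple periodic point assumption, which makes $\pi$ étale at $p$, is precisely what makes this identification canonical; once it is in place the remainder is a standard irreducibility/dimension argument.
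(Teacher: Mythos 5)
Your proposal is correct and follows essentially the same route as the paper: it introduces the same multiplier map on the irreducible variety $N^n_\bfm$, invokes Lemma~\ref{Nnkm_lemma} to make the conclusion independent of the chosen tuple, and converts the local-independence hypothesis at $g$ into a full-rank differential via the local invertibility of $\pi$ at a non-multiple tuple. The only cosmetic difference is that you phrase the conclusion as dominance of the map onto $\bbC^k$, whereas the paper argues via local surjectivity off a codimension-one subset; these are equivalent for an algebraic map on an irreducible variety.
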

\begin{proof}
We define the multiplier map $\Lambda\colon N^n_\bfm\to\bbC^{k}$ that with every point $(f,z_1,\dots,z_{k})\in N^n_\bfm$ associates the vector of multipliers of periodic points $z_1,\dots, z_{k}$:
$$
\Lambda\colon (f,z_1,\dots,z_{k})\mapsto ((f^{\circ m_1}(z_1))', (f^{\circ m_2}(z_2))',\dots,(f^{\circ m_{k}}(z_{k}))').
$$

It follows form Lemma~\ref{Nnkm_lemma} that the multipliers of any $k$ distinct periodic orbits of periods $m_1,\dots,m_{k}$ considered as (multiple valued) functions on $\Rat_n$, can be obtained from the multiplier map $\Lambda\colon N^n_\bfm\to\bbC^{k}$ by precomposition with a suitable inverse branch $\pi^{-1}$ of the projection $\pi\colon N^n_\bfm\to\Rat_n$. 
Consider an inverse branch $\pi^{-1}$, such that $\pi^{-1}(g)$ is equal to the map $g$ with a tuple of $k$ non-multiple periodic points with locally independent multipliers. This means that $\Lambda\circ\pi^{-1}$ is locally surjective at $g$, which implies that $\Lambda$ is locally surjective at $\pi^{-1}(g)$.

According to Definition~\ref{Nnm_def}, the set $N^n_\bfm$ is irreducible, and since the multiplier map $\Lambda$ is an algebraic map on $N^n_\bfm$ that is locally surjective at one point, it follows that $\Lambda$ is locally surjective everywhere outside of some codimension~$1$ subset of $N^n_\bfm$.

Since every branch of $\pi^{-1}$ is a local diffeomorphism everywhere outside of some codimension~$1$ subset of $\Rat_n$, the composition $\Lambda\circ\pi^{-1}$ is always locally surjective at least at one point of $\Rat_n$, which implies that the multipliers of any $k$ distinct periodic orbits of corresponding periods $m_1,\dots,m_k$ are algebraically independent (multiple valued) functions on $\Rat_n$.
\end{proof}



\section{The case of quadratic rational maps}
In this section we will use Proposition~\ref{suf_cond_prop} to prove Theorem~\ref{main1}, which deals with the case $n=2$. 
\begin{proof}[Proof of Theorem~\ref{main1}]
A direct computation shows that every quadratic rational map has no more than one periodic orbit of period~$2$. Thus, in a pair of distinct periodic orbits at least one has period different from~$2$.

Now according to Proposition~\ref{suf_cond_prop}, in order to prove Theorem~\ref{main1}, it is sufficient for every pair of periods $m_1, m_2\ge 1$ that are not simultaneously equal to~$2$, to find a rational map $f\in\Rat_2$ that has two periodic orbits with corresponding periods $m_1, m_2$ and locally independent multipliers. We notice that due to the ideas of quasiconformal surgery, attracting periodic orbits of a rational map will always have locally independent multipliers (e.g. see~\cite{Milnor_hyper} for a proof). Thus, in order to prove Theorem~\ref{main1}, it is sufficient to show that for any pair of periods $m_1, m_2$ that are not simultaneously equal to~$2$, there exists a quadratic rational map with two distinct attracting periodic orbits of periods $m_1$ and $m_2$.

We will construct such a rational map by mating two quadratic polynomials with attracting periodic orbits of periods $m_1$ and $m_2$. Since $m_1$ and $m_2$ are not simultaneously equal to~$2$, one can always choose two hyperbolic polynomials $p_1(z)=z^2+c_1$ and $p_2(z)=z^2+c_2$, such that polynomial $p_1$ has an attracting periodic orbit of period $m_1$, polynomial $p_2$ has an attracting periodic orbit of period $m_2$, and the parameters $c_1$ and $c_2$ do not lie in conjugate limbs of the Mandelbrot set. Then according to~\cite{TanLei92} and~\cite{Shishikura_2000}, there exists a quadratic rational map that is a mating of $p_1$ and $p_2$. In particular, this rational map will have two distinct attracting periodic orbits of periods $m_1$ and $m_2$, which finishes the proof.
\end{proof}

\section{Computation of derivatives}\label{deriv_sec}
The rest of the paper is devoted to showing that for $n\ge 3$ and for any combination of $2n-2$ periods satisfying Theorem~\ref{main2}, the map $f_0(z)=z^n$ has $2n-2$ locally independent periodic orbits of those periods. Then because of Proposition~\ref{suf_cond_prop}, this will imply Theorem~\ref{main2}.

Given a positive integer $n\ge 2$, we consider the family of degree $n$ rational maps
\begin{equation}\label{family}
f_\bfa(z)=\frac{z^n+a_{n-2}z^{n-2}+\dots+a_1z+a_0}{1-a_{n+1}z-a_{n+2}z^2-\dots-a_{2n-1}z^{n-1}},
\end{equation}
parameterized by the $(2n-2)$-dimensional parameter $\bfa=(a_0,\dots,a_{n-2},a_{n+1},\dots,a_{2n-1})\in\bbC^{2n-2}$. 
For $\bfa=0$ the corresponding map $f_0(z)=z^n$ does not have multiple periodic orbits so by the Implicit Function Theorem, every periodic point $z_0\in\hat\bbC$ of period $m$ for the map $f_0$ defines a unique locally analytic function $z(\bfa)$ -- the periodic point of a nearby map $f_\bfa$, such that $z(0)=z_0$ and $f_\bfa^{\circ m}(z(\bfa))=z(\bfa)$, for $\bfa$ from some neighborhood of the origin. Then in that neighborhood of the origin one can consider the multiplier map 
$$
\lambda_{z_0}(\bfa)=(f_\bfa^{\circ m})'(z(\bfa)).
$$
In the following lemma we compute the derivatives of $\lambda_{z_0}(\bfa)$ with respect to different coordinates $a_j$ of the vector $\bfa$.

\begin{lemma}\label{comp_deriv_lemma}
For $n\ge 2$, let $z_0\neq 0,\infty$ be a periodic point of period $m$ of the map $f_0(z)=z^n$. Then for any index $j$ satisfying $0\le j\le 2n-1$ and $j\neq n-1$, $j\neq n$, the following holds:
\begin{equation*}
\frac{d\lambda_{z_0}(0)}{da_j}= (jn^{m-1}-n^m)\sum_{i=0}^{m-1}z_0^{n^i(j-n)}.
\end{equation*}
\end{lemma}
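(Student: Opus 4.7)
The strategy is to differentiate the natural product decomposition $\lambda_{z_0}(\bfa) = \prod_{i=0}^{m-1} f_\bfa'(y_i(\bfa))$ along the orbit $y_i(\bfa) = f_\bfa^{\circ i}(z(\bfa))$. A preliminary observation, exploited throughout, is that the family~(\ref{family}) is arranged so that $\partial_{a_j} f_\bfa|_{\bfa = 0} = z^j$ in \emph{both} the numerator range $0 \le j \le n - 2$ and the denominator range $n + 1 \le j \le 2n - 1$: in the latter case the quotient rule produces $\partial_{a_j} f_\bfa|_0 = z^n \cdot z^{j - n} = z^j$. Thus the one-parameter perturbation along the $j$-th coordinate axis is uniformly $f_t(z) = z^n + t z^j + O(t^2)$. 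Writing $z_i = z_0^{n^i}$ for the unperturbed orbit, the identity $z_0^{n^m} = z_0$ gives $\prod_{i=0}^{m-1} z_i^{n-1} = z_0^{n^m-1} = 1$, hence $\lambda_0 = n^m$.

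Applying the product rule together with the chain rule (the explicit dependence of $f_t'$ contributes $j z_i^{j-1}$, while the dependence of $y_i$ on $t$ contributes $f_0''(z_i) w_i$ with $f_0''(z) = n(n-1)z^{n-2}$), and then dividing each summand by $f_0'(z_i) = n z_i^{n-1}$, a routine calculation collects the derivative into
\begin{equation*}
\lambda_{z_0}'(0) = j n^{m-1} \sum_{i=0}^{m-1} z_i^{j-n} + n^m (n-1) \sum_{i=0}^{m-1} \frac{w_i}{z_i},
\end{equation*}
where $w_i := \partial_t y_i(t)|_{t=0}$ is the infinitesimal displacement of the $i$-th orbit point. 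The main obstacle is evaluating the second sum without explicitly solving for $w_i$, because $w_0$ itself is determined only by the cyclic condition $w_m = w_0$ and the resulting closed form involves inconvenient products of the $z_i$.

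The key trick is to rewrite the linearized orbit equation $w_{i+1} = n z_i^{n-1} w_i + z_i^j$ in terms of $v_i := w_i / z_i$: dividing by $z_{i+1} = z_i^n$ yields the much cleaner recurrence $v_{i+1} = n v_i + z_i^{j-n}$, still subject to $v_m = v_0$. Summing this over a full period and using $v_m = v_0$ to telescope the left-hand side yields $(1 - n) \sum_i v_i = \sum_i z_i^{j-n}$, i.e.,
\begin{equation*}
\sum_{i=0}^{m-1} \frac{w_i}{z_i} = -\frac{1}{n-1} \sum_{i=0}^{m-1} z_i^{j-n}.
\end{equation*}
Substituting back, the factors of $n-1$ cancel and the two contributions combine into $\lambda_{z_0}'(0) = (j n^{m-1} - n^m) \sum_i z_i^{j-n}$, which, via $z_i^{j-n} = z_0^{n^i(j-n)}$, is exactly the target identity. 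The hypotheses $z_0 \ne 0, \infty$ and $n \ge 2$ are precisely what make the substitution $v_i = w_i / z_i$ and the division by $n-1$ legitimate.
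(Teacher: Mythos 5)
Your proof is correct, and it is genuinely more self-contained than the paper's. The paper handles the lemma by reduction: it observes (exactly as you do) that to first order in $a_j$ the family~(\ref{family}) restricted to the $j$-th axis is $z^n+a_jz^j+o(a_j)$ near the unit circle, and then delegates the actual derivative computation to Lemma~3.1 of the author's earlier paper on polynomials, which is quoted without proof. You share the paper's key reduction step --- the quotient-rule observation that $\partial_{a_j}f_\bfa|_{\bfa=0}=z^j$ in both the numerator and denominator ranges of $j$ --- but then carry out the computation from scratch. Your argument is sound throughout: the product-rule expansion of $\lambda'(0)$ using $\prod_{l\neq i}f_0'(z_l)=n^{m-1}z_i^{1-n}$ (which rests on $z_0^{n^m-1}=1$), the linearized orbit recursion $w_{i+1}=nz_i^{n-1}w_i+z_i^j$, and especially the substitution $v_i=w_i/z_i$ that turns it into $v_{i+1}=nv_i+z_i^{j-n}$ and lets the periodicity condition $v_m=v_0$ telescope the sum to $\sum_i v_i=-\frac{1}{n-1}\sum_i z_i^{j-n}$, after which the factor $n-1$ cancels exactly. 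What your route buys is a complete proof inside the present paper, making the formula verifiable without consulting the reference; what the paper's route buys is brevity and the reuse of an already-published computation (which, one expects, proceeds along lines similar to yours).
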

\begin{proof}
First of all, we notice that when $0\le j\le n-2$, the result of Lemma~\ref{comp_deriv_lemma} follows from~\cite[Lemma~3.1]{mult_n}.
In order to formulate, Lemma~3.1 from~\cite{mult_n}, we need the following construction: for a fixed non-negative integer $j$ consider the family of maps 
\begin{equation}\label{f_aj}
f_{a,j}(z)=z^n+az^j,
\end{equation}
parameterized by a single parameter $a\in\bbC$. If $z_0$ is a periodic point of period $m$ of the map $f_0(z)=z^n$, then in the same way as before, we can obtain a locally analytic function $z(a)$ -- the periodic point of a nearby map $f_{a,j}$, such that $z(0)=z_0$ and $f_{a,j}^{\circ m}(z(a))=z(a)$. 
\begin{lemma}\label{cit_lemma} \cite[Lemma~3.1]{mult_n}
For $n\ge 2$, let $z_0\neq 0,\infty$ be a periodic point of period $m$ of the map $f_0(z)=z^n$. Then for any non-negative integer $j$ and the corresponding multiplier map $\mu_{j,z_0}(a)=(f_{a,j}^{\circ m})'(z(a))$ the following holds:
\begin{equation*}
\frac{d\mu_{j,z_0}(0)}{da}= (jn^{m-1}-n^m)\sum_{i=0}^{m-1}z_0^{n^i(j-n)}.
\end{equation*}
\end{lemma}

\begin{remark}
We note that the original formulation of Lemma~\ref{cit_lemma} contained an additional restriction for the possible values of $j$. That restriction was not used in the proof of the Lemma and appeared only in order to comply with a specific construction considered in that paper.
\end{remark}

Now we return to the proof of the second case of Lemma~\ref{comp_deriv_lemma}, namely, when $j$ satisfies $n+1\le j\le 2n-1$. For all $k\neq j$ we fix $a_k=0$, while we allow $a_j$ to change. In other words, we consider the vector of parameters $\bfa$ in the form 
\begin{equation}\label{bfa_0_form}
\bfa=(0,0,\dots,a_j,0,\dots,0).
\end{equation}
Then for all sufficiently small $a_j$ and for all $z$ such that $|z|<2$, the function $f_\bfa(z)$ can be expressed as a convergent series
$$
f_\bfa(z)= \frac{z^n}{1-a_jz^{j-n}}=z^n+a_jz^{j}+o(a_j)=f_{a_j,j}(z)+o(a_j),
$$
where $f_{a_j,j}(z)$ is defined in~(\ref{f_aj}) and by $o(a_j)$ we denote the terms that contain $a_j$ in the power greater than~$1$. In particular, since all bounded nonzero periodic points of the map $f_0(z)=z^n$ have modulus~$1$, then for all vectors $\bfa$ of the form~(\ref{bfa_0_form}) with sufficiently small $a_j$ we have
$$
f_\bfa(z(\bfa))= f_{a_j,j}(z(\bfa))+o(a_j).
$$

Finally, from the previous identity it is not hard to see that 
$$
\left.\frac{d}{da_j} (f_\bfa^{\circ m})'(z(\bfa))\right|_{\bfa=0}= \left.\frac{d}{da_j} (f_{a_j,j}^{\circ m})'(z(\bfa))\right|_{\bfa=0},
$$
so
$$
\frac{d\lambda_{z_0}(0)}{da_j}=\frac{d\mu_{j,z_0}(0)}{da_j},
$$
and Lemma~\ref{comp_deriv_lemma} follows from Lemma~\ref{cit_lemma}.



\end{proof}

\begin{proposition}\label{poly_prop}
For every positive integers $n$, $m$ with $n\ge 2$ and every index $j$ satisfying $0\le j\le 2n-1$ and $j\neq n-1$, $j\neq n$, there exists a nonzero polynomial $P_{n,j,m}(z)$, such that if $z_0\neq 0,\infty$ is a periodic point of period $m$ of the function $f_0(z)=z^n$, then
$$
\frac{d\lambda_{z_0}(0)}{da_j}= \frac{1}{z_0^{n^{m-1}}}P_{n,j,m}\left(z_0\right).
$$
Moreover,
\begin{equation}\label{deg_eq}
\deg P_{n,j,m}=
\begin{cases}
(j+1)n^{m-1}-1, &\text{for $0\le j\le n-2$,} \\
(j-n+1)n^{m-1}, &\text{for $n+1\le j\le 2n-2$,} \\
2n^{m-1}-n^{m-2}, &\text{for $j=2n-1$ and $m\ge 2$,} \\
1, &\text{for $j=2n-1$ and $m=1$},
\end{cases}
\end{equation}
and the following properties hold:

(a) No two polynomials $P_{n,j_1,m}$ and $P_{n,j_2,m}$, where $0\le j_1<j_2\le n-2$, have terms of the same degree.

(b) If $m\ge 2$, then no two polynomials $P_{n,j_1,m}$ and $P_{n,j_2,m}$, where $0\le j_1<j_2\le 2n-2$ and $j_1,j_2\neq n, n-1$, have terms of the same degree.

(c) If $m\ge 3$, then no two polynomials $P_{n,j_1,m}$ and $P_{n,j_2,m}$, where $0\le j_1<j_2\le 2n-1$ and $j_1,j_2\neq n, n-1$, have terms of the same degree.
\end{proposition}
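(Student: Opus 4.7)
The plan is to derive an explicit form for $P_{n,j,m}$ directly from Lemma~\ref{comp_deriv_lemma} and verify the stated properties by elementary arithmetic on the exponents that appear.

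The starting point is the identity of Lemma~\ref{comp_deriv_lemma}. Multiplying both sides by $z_0^{n^{m-1}}$ expresses $z_0^{n^{m-1}}\frac{d\lambda_{z_0}(0)}{da_j}$ as the sum $n^{m-1}(j-n)\sum_{i=0}^{m-1} z_0^{e_i}$ with exponents $e_i=n^{m-1}+n^i(j-n)$ and nonzero leading factor (since $j\neq n$). Because $z_0\neq 0$ is an $m$-periodic point of $z^n$, the relation $z_0^{n^m-1}=1$ holds, and each $e_i$ outside the range $\{0,\dots,n^m-2\}$ can be replaced by its unique representative modulo $n^m-1$; the resulting expression is an honest polynomial in $z_0$, which I take as $P_{n,j,m}$.

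I then analyse which exponents require reduction in each of the three ranges of $j$. For $0\le j\le n-2$, the exponents $e_0,\dots,e_{m-2}$ already lie in $[0,n^m-2]$, while $e_{m-1}=n^{m-1}(j+1-n)<0$ reduces to $n^{m-1}(j+1)-1$; this reduced exponent dominates and yields degree $(j+1)n^{m-1}-1$. For $n+1\le j\le 2n-2$, every $e_i$ is already in $[0,n^m-2]$, and the maximum $e_{m-1}=(j-n+1)n^{m-1}$ is the degree. For $j=2n-1$, every $e_i$ is non-negative but $e_{m-1}=n^m$ reduces to $1$; when $m=1$ this gives $P_{n,2n-1,1}(z)=(n-1)z$, and for $m\ge 2$ the leading exponent becomes $e_{m-2}=2n^{m-1}-n^{m-2}$. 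This establishes the degree formula~(\ref{deg_eq}) and in particular shows $P_{n,j,m}\not\equiv 0$.

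For (a), (b), (c) I compare the exponent multisets as $j$ varies. A residue analysis modulo $n$ handles most pairs: for $m\ge 2$, every regular exponent with $i\ge 1$ is $\equiv 0\pmod n$, every regular exponent with $i=0$ is $\equiv j\pmod n$, the reduced exponent $n^{m-1}(j+1)-1$ from case~(i) is $\equiv -1\pmod n$, and the exponent $1$ from case~(iii) is $\equiv 1\pmod n$. The remaining potential collisions reduce to small arithmetic equations; for instance, within case~(ii) an equality $n^{i_1}(j_1-n)=n^{i_2}(j_2-n)$ with $j_1<j_2$ forces $j_2\ge 2n$, contradicting $j_2\le 2n-2$, while between the reduced exponent of $P_{n,j_1,m}$ and the $i=0$ exponent of $P_{n,2n-1,m}$ the equality becomes $n^{m-1}j_1=n$, which has no admissible solution for $m\ge 3$. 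The precise threshold $m\ge 3$ in part~(c) is dictated by this last equation, which at $m=2$ yields the genuine coincidence between $P_{n,1,2}$ and $P_{n,2n-1,2}$. The main technical obstacle is the bookkeeping: every pair of exponent types across the three ranges of $j$ must be compared, though each individual comparison reduces to a short congruence or size estimate.
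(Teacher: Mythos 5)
Your proposal is correct and follows essentially the same route as the paper's proof: you compute $P_{n,j,m}$ explicitly from Lemma~\ref{comp_deriv_lemma} by reducing exponents via $z_0^{n^m-1}=1$, read off the degrees from the unique exponent requiring reduction in each range of $j$, and establish (a)--(c) by comparing exponents through congruences mod $n$ and size estimates, correctly isolating the equation $j_1n^{m-1}=n$ (hence the collision between $P_{n,1,2}$ and $P_{n,2n-1,2}$ at $m=2$) as the reason for the threshold $m\ge 3$ in part (c). The only difference is organizational (residue classes mod $n$ versus the paper's divisibility argument), and the remaining pairwise comparisons you defer to routine checking do indeed all reduce to one-line congruence or sign/size arguments.
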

\begin{proof}
The proof of Proposition~\ref{poly_prop} follows from Lemma~\ref{comp_deriv_lemma} and the fact that $z_0^{n^m}=z_0$. First we compute the polynomials $P_{n,j,m}$ and their degrees.

For $j=0,\dots, n-2$,
\begin{multline*}
\frac{d\lambda_{z_0}(0)}{da_j}= \frac{jn^{m-1}-n^m}{z_0^{n^{m-1}}}\sum_{i=0}^{m-1}z_0^{n^i(j-n)+n^{m-1}}= \\
\frac{jn^{m-1}-n^m}{z_0^{n^{m-1}}}\left(z_0^{(j+1)n^{m-1}-1}+\sum_{i=0}^{m-2}z_0^{n^i(j-n)+n^{m-1}}\right),
\end{multline*}
so
\begin{equation}\label{Pj1}
P_{n,j,m}(z)=(jn^{m-1}-n^m)\left(z^{(j+1)n^{m-1}-1}+\sum_{i=0}^{m-2}z^{n^i(j-n)+n^{m-1}}\right),
\end{equation}
and $\deg P_{n,j,m}= (j+1)n^{m-1}-1$.


For $j=n+1,\dots, 2n-2$, 
$$
\frac{d\lambda_{z_0}(0)}{da_j}= \frac{jn^{m-1}-n^m}{z_0^{n^{m-1}}}\sum_{i=0}^{m-1}z_0^{n^i(j-n)+n^{m-1}},
$$
so
\begin{equation}\label{Pj2}
P_{n,j,m}(z)=(jn^{m-1}-n^m)\sum_{i=0}^{m-1}z^{n^i(j-n)+n^{m-1}},
\end{equation}
and $\deg P_{n,j,m}=(j-n+1)n^{m-1}$.

For $j=2n-1$,
$$
\frac{d\lambda_{z_0}(0)}{da_j}= \frac{jn^{m-1}-n^m}{z_0^{n^{m-1}}}\sum_{i=0}^{m-1}z_0^{n^i(n-1)+n^{m-1}}=
\frac{jn^{m-1}-n^m}{z_0^{n^{m-1}}}\left(z_0+\sum_{i=0}^{m-2}z_0^{n^i(n-1)+n^{m-1}}\right),
$$
so
\begin{equation}\label{Pj3}
P_{n,j,m}(z)=(jn^{m-1}-n^m)\left(z+\sum_{i=0}^{m-2}z^{n^i(n-1)+n^{m-1}}\right).
\end{equation}
It follows from~(\ref{Pj3}) that if $m=1$, then $\deg P_{n,j,m}=1$, and if $m\ge 2$, then $\deg P_{n,j,m}=2n^{m-1}-n^{m-2}$.

Now we will prove properties (a),~(b), and~(c). According to~(\ref{Pj1}), when $m=1$, the polynomial $P_{n,j,m}(z)$ consists of one monomial of degree ${(j+1)n^{m-1}-1}$, which immediately implies property~(a) for $m=1$. Thus, since properties (b) and (c) require $m$ to be greater than~$1$, we may further assume without loss of generality that $m\ge 2$.

First of all, we notice that the terms of the form $z^{n^i(j-n)+n^{m-1}}$ from~(\ref{Pj1}),~(\ref{Pj2}) and~(\ref{Pj3}) have different degrees for different pairs $(i, j)$ with $0\le j\le 2n-1$ and $j \neq n, n-1$. Indeed, assume that for two pairs $(i_1,j_1)$ and $(i_2,j_2)$, the degrees match. This means that 
\begin{equation}\label{contradict1}
n^{i_1-i_2}(j_1-n)=j_2-n.
\end{equation}
If $i_1=i_2$, this implies that $j_1=j_2$. If $i_1\neq i_2$, then we can assume that $i_1>i_2$, and~(\ref{contradict1}) implies that $n$ divides $j_2$. Thus, $j_2=0$ and~(\ref{contradict1}) is transformed into the identity $n^{i_1-i_2-1}(j_1-n)=-1$. The latter is not possible for $i_1>i_2$ and $j_1\neq n-1$.

Next, we compare the degrees of the terms of the form $z^{n^i(j-n)+n^{m-1}}$ with the degrees of the terms $z^{(j+1)n^{m-1}-1}$ from~(\ref{Pj1}). Assume that $(j_1+1)n^{m-1}-1= n^i(j_2-n)+n^{m-1}$ for some nonnegative integers $j_1, j_2, i$ with $0\le j_1,j_2\le 2n-1$ and $j_1,j_2\neq n, n-1$. According to our assumption, $m\ge 2$, so $n$ does not divide the left hand side of the above identity, so it follows that $i=0$ and 
\begin{equation}\label{contradict2}
j_1n^{m-1}=j_2+1-n
\end{equation}
Identity~(\ref{contradict2}) implies that $n$ divides $j_2+1$, and since $0\le j_2\le 2n-1$ and $j_2\neq n-1$, we have the only possibility $j_2=2n-1$. This finishes the proof of properties~(a) and~(b).

Finally we notice that if $j_2=2n-1$, then identity~(\ref{contradict2}) does not hold for $m\ge 3$. As the last step, it is not hard to see that for $m\ge 3$ no terms in~(\ref{Pj1}),~(\ref{Pj2}) and~(\ref{Pj3}) except for the first term in~(\ref{Pj3}) have degree~$1$. This finishes the proof of property~(c).
\end{proof}

Now we separately consider the multiplier at infinity. Notice that infinity is a non-multiple fixed point of the map $f_0(z)=z^n$. Moreover, infinity is a fixed point of every map from the family~(\ref{family}), so we can consider the corresponding multiplier map $\lambda_\infty(\bfa)=f_\bfa'(\infty)$.
\begin{lemma}\label{diff_inf_lemma}
For $n\ge 2$ and for any index $j$ satisfying $0\le j\le 2n-1$ and $j\neq n-1$, $j\neq n$, the following holds:
\begin{equation*} 
\frac{d\lambda_{\infty}(0)}{da_j}=
\begin{cases}
-1, &\text{for $j=2n-1$,} \\
0, &\text{for $j\neq 2n-1$}.
\end{cases}
\end{equation*}
\end{lemma}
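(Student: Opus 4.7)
The plan is to compute $\lambda_\infty(\bfa)$ as an explicit function of $\bfa$ by switching to a local coordinate at infinity, and then to read off the partial derivatives directly. Since infinity is a fixed point of every map $f_\bfa$ in the family~(\ref{family}), the multiplier at infinity is defined globally on parameter space, not merely near $\bfa=0$, so there is no need to track a branch of analytic continuation.

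First I would introduce the coordinate $w=1/z$ near infinity and set $g_\bfa(w)=1/f_\bfa(1/w)$, which is the local representative of $f_\bfa$ near the fixed point $w=0$, and whose derivative $g_\bfa'(0)$ is by definition $\lambda_\infty(\bfa)$. Substituting $z=1/w$ into~(\ref{family}) and clearing the factor $w^n$ from numerator and denominator gives
\begin{equation*}
g_\bfa(w)=\frac{w^n-a_{n+1}w^{n-1}-a_{n+2}w^{n-2}-\dots-a_{2n-1}w}{1+a_{n-2}w^2+a_{n-3}w^3+\dots+a_0 w^n}.
\end{equation*}
In particular $g_\bfa(0)=0$, confirming that infinity is a fixed point, and the denominator equals $1$ at $w=0$.

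Because the denominator does not vanish at $w=0$, the derivative $g_\bfa'(0)$ equals the coefficient of $w$ in the numerator (since all higher terms in the Taylor expansion are multiplied by corrections from the denominator that contribute only at order $w^2$ or higher). The coefficient of $w$ in the numerator is $-a_{2n-1}$, so
\begin{equation*}
\lambda_\infty(\bfa)=-a_{2n-1}.
\end{equation*}
Differentiating this identity with respect to each allowed coordinate $a_j$ (with $0\le j\le 2n-1$, $j\neq n-1,n$) yields the claimed formula: the derivative is $-1$ when $j=2n-1$ and $0$ in every other case. No step here is a real obstacle; the only thing to watch is the bookkeeping of signs and powers when clearing $w^n$ from the substitution, which is why I would write out the numerator and denominator of $g_\bfa$ in full before reading off the linear term.
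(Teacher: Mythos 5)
Your computation matches the paper's proof exactly: both pass to the coordinate $w=1/z$, obtain the same expression for $g_\bfa$, and read off $\lambda_\infty(\bfa)=g_\bfa'(0)=-a_{2n-1}$. The argument is correct and there is nothing to add.
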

\begin{proof}
After the coordinate change $z\mapsto 1/z$, the map $f_\bfa(z)$ takes the form
$$
g_\bfa(z)=\frac{z^n-a_{n+1}z^{n-1}-\dots-a_{2n-1}z}{1+a_{n-2}z^2+\dots+a_1z^{n-1}+a_0z^n}.
$$
Then it follows that $\lambda_\infty(\bfa)=g_\bfa'(0)=-a_{2n-1}$, and the result of the lemma becomes evident.
\end{proof}

\section{The number of periodic points}
For $n\ge 2$, let $\nu_n(m)$ denote the number of bounded periodic points of the polynomial map $f_0(z)=z^n$ with period $m$. Since this polynomial does not have multiple periodic points, the function $\nu_n(m)$ can be computed inductively by the formula
$$
n^m=\sum_{r|m}\nu_n(r),\qquad\text{or}\quad \nu_n(m)=\sum_{r|m}\mu(m/r)n^r,
$$
where the summation goes over all divisors $r\ge 1$ of $m$, and $\mu(m/r)\in\{\pm 1,0\}$ is the M\"{o}bius function defined by
$$
\mu(p_1\dots p_k)=(-1)^k
$$
for a product of $k\ge 0$ distinct primes and $\mu(m)=0$, if $m$ is not a product of distinct primes.

It is easy to see from these formulas that 
\begin{equation*} 
\nu_n(m)\ge n^m-n^{m-2}, \quad\text{for }m\ge 3,\qquad\text{and}
\end{equation*}
$$
\nu_n(1)=n,\quad \nu_n(2)=n^2-n.
$$

Let $\hat\nu_n(m)$ denote the number of bounded non-zero periodic points of the polynomial $f_0(z)=z^n$ with period $m$. Then, since zero is a fixed point of the polynomial $f_0$, it follows that $\hat\nu_n(m)=\nu_n(m)$, for $m>1$ and $\hat\nu_n(1)=\nu(1)-1$. Thus from the previous relations on $\nu_n(m)$ we obtain
\begin{equation}
\begin{array}{c}\label{nu_k3}
\hat\nu_n(m)\ge n^m-n^{m-2}, \quad\text{for }m\ge 3,\qquad\text{and} \\
\hat\nu_n(1)=n-1,\quad \hat\nu_n(2)=n^2-n.
\end{array}
\end{equation}

\section{Inductive arguments}\label{induct_sec}

\begin{definition}
Given a $(2n-2)$-dimensional vector of points
\begin{equation}\label{z_vect}
\bfz=(z_0,z_1,\dots,z_{n-2},z_{n+1},\dots,z_{2n-1})\in\hat\bbC^{2n-2},
\end{equation}
and a $(2n-2)$-dimensional vector of of periods
\begin{equation}\label{m_vect}
\bfm=(m_0,\dots,m_{n-2},\,m_{n+1},\dots,m_{2n-1})\in\bbN^{2n-2},
\end{equation}
we will say that $\bfz$ is an $\bfm$\emph{-periodic vector of a map} $f$, if each element $z_j$ is a periodic point of $f$ of corresponding period $m_j$.
We will say that $\bfz$ is simply an $\bfm$-\emph{periodic vector}, if it is an $\bfm$-periodic vector of the map $f_0(z)=z^n$.
\end{definition}


According to Section~\ref{deriv_sec}, if $\bfz$ is an $\bfm$-periodic vector, then for each $z_j$ one can consider the corresponding multiplier map $\lambda_{z_j}(\bfa)$ defined for all sufficiently small vectors $\bfa$. Thus 
we can define a function $\Lambda_\bfz\colon (\bbC^{2n-2}, 0)\to\bbC^{2n-2}$, such that
\begin{equation*}
\Lambda_\bfz(\bfa)=(\lambda_{z_0}(\bfa),\dots,\lambda_{z_{n-2}}(\bfa),\, \lambda_{z_{n+1}}(\bfa),\dots,\lambda_{z_{2n-2}}(\bfa)).
\end{equation*}

\begin{definition}\label{minor_def}
(a) For each $j$, satisfying $0\le j\le 2n-1$ and $j\neq n-1$, $j\neq n$, by $\mcJ_\bfz(j)$ we denote the submatrix of the Jacobi matrix $\frac{d\Lambda_\bfz}{d\bfa}(0)$, obtained from $\frac{d\Lambda_\bfz}{d\bfa}(0)$ by deleting all columns and rows that are located to the right and below the diagonal element $\frac{\partial\lambda_{z_j}}{\partial a_j}(0)$.

(b) For convenience of notation we define $\mcJ_\bfz(n)=\mcJ_\bfz(n-2)$.
\end{definition}



The goal of this Section is to construct inductive arguments which under some restrictions on the vector $\bfm$ will allow us to prove existence of an $\bfm$-periodic vector $\bfz$, such that all matrices $\mcJ_\bfz(j)$ are non-degenerate.

The following proposition will serve as the base of our induction:
\begin{proposition}\label{base_prop}
For $n\ge 2$ and for any vector of periods $\bfm\in\bbN^{2n-2}$ there exists an $\bfm$-periodic vector $\bfz$, such that $\mcJ_\bfz(0)$ is non-degenerate.
\end{proposition}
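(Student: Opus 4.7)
The plan is to first unpack Definition~\ref{minor_def}(a) in the case $j=0$. Deleting every column and row that sits strictly to the right of, or strictly below, the $(0,0)$ diagonal entry of $\frac{d\Lambda_\bfz}{d\bfa}(0)$ leaves only that single entry behind, so $\mcJ_\bfz(0)$ is a $1\times 1$ matrix whose unique element is $\frac{\partial\lambda_{z_0}}{\partial a_0}(0)$. In particular, non-degeneracy of $\mcJ_\bfz(0)$ does not involve the coordinates $z_1,\dots,z_{n-2},z_{n+1},\dots,z_{2n-1}$ at all, and those can be chosen to be any periodic points of $f_0(z)=z^n$ of the prescribed periods (such points exist for every positive period, since $\nu_n(m)\ge 1$ for all $m\ge 1$ and $n\ge 2$). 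Hence the whole proposition reduces to producing a single $z_0\ne 0,\infty$ that is a period-$m_0$ point of $f_0$ satisfying $\frac{\partial\lambda_{z_0}}{\partial a_0}(0)\ne 0$.

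Next, I specialize Proposition~\ref{poly_prop} to $j=0$ to rewrite the derivative as
\[
\frac{d\lambda_{z_0}(0)}{da_0}=\frac{1}{z_0^{n^{m_0-1}}}\,P_{n,0,m_0}(z_0),
\]
where $P_{n,0,m_0}$ is a nonzero polynomial of degree $n^{m_0-1}-1$ by~(\ref{deg_eq}). It therefore suffices to find a bounded nonzero period-$m_0$ point of $f_0$ at which $P_{n,0,m_0}$ does not vanish. Since a nonzero polynomial of degree $d$ has at most $d$ roots in $\bbC$, it is enough to verify the counting inequality $\hat\nu_n(m_0)>n^{m_0-1}-1$.

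The final step is a case check based on the bounds in~(\ref{nu_k3}). For $m_0=1$ the polynomial $P_{n,0,1}$ is a nonzero constant (degree $0$) and $\hat\nu_n(1)=n-1\ge 1$ for $n\ge 2$, so any nonzero fixed point of $f_0$ works. For $m_0=2$ the inequality $\hat\nu_n(2)=n^2-n>n-1$ is equivalent to $(n-1)^2>0$, which holds for $n\ge 2$. For $m_0\ge 3$ the bound $\hat\nu_n(m_0)\ge n^{m_0}-n^{m_0-2}$ combined with $n^{m_0}-n^{m_0-1}-n^{m_0-2}\ge 0$ (clear since $n\ge 2$) gives strictly more than $n^{m_0-1}-1$. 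This exhausts all cases and produces the required $z_0$. The argument is essentially a reduction to a nonvanishing-polynomial statement followed by a counting comparison; no individual step is hard, with the tightest case being $m_0=2$, which still comes out trivially as $(n-1)^2>0$.
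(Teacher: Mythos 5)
Your proof is correct and follows essentially the same route as the paper: reduce $\mcJ_\bfz(0)$ to the single entry $\frac{\partial\lambda_{z_0}}{\partial a_0}(0)$, invoke Proposition~\ref{poly_prop} with $j=0$, and win by comparing $\deg P_{n,0,m_0}=n^{m_0-1}-1$ with the count $\hat\nu_n(m_0)$ of bounded nonzero period-$m_0$ points. The only difference is that you spell out the case check $m_0=1,2,\ge 3$ for the inequality $\hat\nu_n(m_0)>n^{m_0-1}-1$, which the paper asserts without detail.
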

\begin{proof}
We notice that $\mcJ_\bfz(0)$ is a $(1\times 1)$-matrix $\frac{d\lambda_{z_0}}{da_0}(0)$, so in order to prove the Proposition, it is sufficient to show that there exists a periodic point $z_0$ of period $m_0$ for the map $f_0(z)=z^n$, such that $\frac{d\lambda_{z_0}}{da_0}(0)\neq 0$.

According to Proposition~\ref{poly_prop}, if $z_0\neq 0,\infty$, then 
$$
\frac{d\lambda_{z_0}(0)}{da_0}= \frac{1}{z_0^{n^{m_0-1}}}P_{n,0,m_0}\left(z_0\right),
$$
where $P_{n,0,m_0}$ is a non-identically zero polynomial of degree $\deg P_{n,0,m_0}=n^{m_0-1}-1$. Notice that according to~(\ref{nu_k3}), the number of bounded nonzero periodic points of period $m_0$ for the map $f_0(z)=z^n$ is equal to $\hat{\nu}_n(m_0)>\deg P_{n,0,m_0}=n^{m_0-1}-1$. Thus, there always exists a periodic point $z_0$, for which $P_{n,0,m_0}\left(z_0\right)\neq 0$. This finishes the proof.
\end{proof}




The next two lemmas will constitute the step of our induction.
\begin{lemma}\label{step1_lemma}
Assume that $n\ge 2$ and the vector of periods $\bfm$ in~(\ref{m_vect}) is such that $m_{n+1},\dots,m_{2n-3}\ge 2$ and $m_{2n-2}\ge 3$. Assume that there exists an $\bfm$-periodic vector $\bfz$, such that the matrix $\mcJ_\bfz(j-1)$ is non-degenerate for some $j$ that satisfies $1\le j\le 2n-2$, $j\neq n-1$, $j\neq n$. Then the vector $\bfz$ can be chosen in such a way that the matrix $\mcJ_\bfz(j)$ is also non-degenerate.
\end{lemma}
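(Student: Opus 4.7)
The plan is to hold the entries $z_0,\dots,z_{j-1}$ (at valid indices) fixed at the values making $\mcJ_\bfz(j-1)$ non-degenerate, and to adjust only the new entry $z_j$. Expanding $\det\mcJ_\bfz(j)$ along its last row---the row indexed by $z_j$, whose entries are $\frac{\partial\lambda_{z_j}}{\partial a_l}(0)$ for valid $l\le j$---and substituting the formula from Proposition~\ref{poly_prop} (which requires $z_j\neq 0,\infty$), one gets
\[
\det\mcJ_\bfz(j)=\frac{1}{z_j^{n^{m_j-1}}}\,Q(z_j),\qquad Q(z):=\sum_{l}\varepsilon_l\,M_{j,l}\,P_{n,l,m_j}(z),
\]
where $\varepsilon_l\in\{\pm 1\}$ are the cofactor signs and each $M_{j,l}$ is the $(j,l)$-minor of $\mcJ_\bfz(j)$. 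Since that minor is obtained by deleting the row indexed by $z_j$, it involves only $z_0,\dots,z_{j-1}$ and is independent of $z_j$. The crucial observation is that $M_{j,j}=\det\mcJ_\bfz(j-1)$, which is nonzero by the induction hypothesis; here one uses the convention $\mcJ_\bfz(n)=\mcJ_\bfz(n-2)$ to make sense of the case $j=n+1$.

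Next I would show that $Q$ is not the zero polynomial. Under the hypotheses on $\bfm$, any $j\ge n+1$ automatically forces $m_j\ge 2$, so Proposition~\ref{poly_prop}(b) applies to the family $\{P_{n,l,m_j}:0\le l\le j,\ l\neq n-1,n\}$; and for $j\le n-2$ only indices $l\le n-2$ appear, so Proposition~\ref{poly_prop}(a) applies without any restriction on $m_j$. In either case the polynomials $P_{n,l,m_j}$ indexed by distinct $l$ have pairwise disjoint sets of monomial degrees, hence the summands of $Q$ occupy pairwise disjoint degrees. Therefore $Q\equiv 0$ would force $M_{j,l}\,P_{n,l,m_j}\equiv 0$ for every $l$, contradicting $M_{j,j}\neq 0$ and $P_{n,j,m_j}\not\equiv 0$.

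Finally I would bound $\deg Q$ via~(\ref{deg_eq}) and lower-bound the number $\hat\nu_n(m_j)$ of bounded nonzero period-$m_j$ points of $f_0$ via~(\ref{nu_k3}), and check the strict inequality $\deg Q<\hat\nu_n(m_j)$ by a short case analysis over the three ranges $0\le j\le n-2$, $n+1\le j\le 2n-3$, $j=2n-2$, combined with the allowed values of $m_j$. The hypotheses $m_{n+1},\dots,m_{2n-3}\ge 2$ and $m_{2n-2}\ge 3$ are exactly what is needed for the inequality to be strict in every case. Having this, one picks $z_j$ to be any period-$m_j$ point of $f_0$ at which $Q$ does not vanish, and then $\det\mcJ_\bfz(j)\neq 0$. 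I expect the only delicate step to be precisely this numerical check: the boundary case $j=2n-2$, $m_j=2$ fails because $\deg P_{n,2n-2,m_j}=(n-1)n=\hat\nu_n(2)$, which is exactly why the hypothesis demands $m_{2n-2}\ge 3$ rather than just $\ge 2$.
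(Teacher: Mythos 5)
Your proposal is correct and follows essentially the same route as the paper's proof: cofactor expansion of $\det\mcJ_\bfz(j)$ along the $j$-th row, non-vanishing of the resulting polynomial via the disjoint-degree properties (a)/(b) of Proposition~\ref{poly_prop} together with $\det\mcJ_\bfz(j-1)\neq 0$, and the degree-versus-$\hat\nu_n(m_j)$ count split into the same three ranges of $j$. You even correctly pinpoint the boundary case $j=2n-2$, $m_j=2$ where $\deg P_{n,2n-2,2}=n^2-n=\hat\nu_n(2)$, which is exactly why the paper assumes $m_{2n-2}\ge 3$.
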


\begin{lemma}\label{step2_lemma}
Assume that $n\ge 3$ and the vector of periods $\bfm$ in~(\ref{m_vect}) is such that $m_{2n-1}\neq 2$. Assume that there exists an $\bfm$-periodic vector $\bfz$, such that the matrix $\mcJ_\bfz(2n-2)$ is non-degenerate. Then the vector $\bfz$ can be chosen in such a way that the matrix $\mcJ_\bfz(2n-1)=\frac{d\Lambda_\bfz}{d\bfa}(0)$ is also non-degenerate.
\end{lemma}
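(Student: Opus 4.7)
The plan is to keep $z_0,\dots,z_{2n-2}$ from the given $\bfm$-periodic vector unchanged (so that $\mcJ_\bfz(2n-2)$ remains non-degenerate, since it does not depend on $z_{2n-1}$), and to choose $z_{2n-1}$ so as to make the full Jacobi matrix $\mcJ_\bfz(2n-1)$ non-degenerate. Expanding $\det\mcJ_\bfz(2n-1)$ along the row corresponding to $z_{2n-1}$, the cofactor attached to the $(2n-1, 2n-1)$-entry equals $\det\mcJ_\bfz(2n-2)\neq 0$, and every other cofactor $M_j$ along this row depends only on $z_0,\dots,z_{2n-2}$ and is independent of $z_{2n-1}$.

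If $m_{2n-1}=1$, I would take $z_{2n-1}=\infty$, which is a fixed point of $f_0(z)=z^n$. By Lemma~\ref{diff_inf_lemma} the varying row reads $(0,\dots,0,-1)$, so $\det\mcJ_\bfz(2n-1)=-\det\mcJ_\bfz(2n-2)\neq 0$ at once. If instead $m:=m_{2n-1}\ge 3$, I would regard $z_{2n-1}$ as a formal variable and invoke Proposition~\ref{poly_prop}, which expresses each entry of the varying row as $z_{2n-1}^{-n^{m-1}}P_{n,j,m}(z_{2n-1})$. Factoring out this common denominator, the expansion becomes
\[
\det\mcJ_\bfz(2n-1)=z_{2n-1}^{-n^{m-1}}\,Q(z_{2n-1}),\qquad Q(z)=\sum_{j}(\pm 1)\,M_j\,P_{n,j,m}(z),
\]
where the sum runs over $j\in\{0,\dots,n-2\}\cup\{n+1,\dots,2n-1\}$. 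Since $M_{2n-1}=\det\mcJ_\bfz(2n-2)\neq 0$ and $P_{n,2n-1,m}$ is a nonzero polynomial, the summand $\pm M_{2n-1}P_{n,2n-1,m}(z)$ contributes nonzero monomials to $Q$; property~(c) of Proposition~\ref{poly_prop} (which applies exactly because $m\ge 3$) guarantees that no monomial of $P_{n,2n-1,m}$ shares a degree with any monomial of $P_{n,j,m}$ for $j\neq 2n-1$, so these terms cannot be cancelled and $Q\not\equiv 0$.

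It then remains to pick a bounded nonzero periodic point of period $m$ of $f_0$ at which $Q$ does not vanish. The degree formulae in Proposition~\ref{poly_prop} give $\deg Q\le\max_j\deg P_{n,j,m}=(n-1)n^{m-1}$; the maximum is attained at $j=2n-2$ because $n\ge 3$ makes $(n-1)n^{m-1}>2n^{m-1}-n^{m-2}$. Estimate~(\ref{nu_k3}) supplies at least $n^m-n^{m-2}$ admissible candidates for $z_{2n-1}$, and a direct computation gives $n^m-n^{m-2}-(n-1)n^{m-1}=n^{m-2}(n-1)>0$, so strictly more candidates than roots of $Q$. Hence some bounded nonzero periodic point of period $m$ avoids the zero set of $Q$, completing the argument. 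The only delicate part is the monomial-distinctness analysis of Proposition~\ref{poly_prop}(c); this is precisely why $m_{2n-1}=2$ must be excluded and why the period-$1$ case is handled separately via the infinity trick.
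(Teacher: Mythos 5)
Your proof is correct and follows essentially the same route as the paper: the case $m_{2n-1}=1$ is handled by placing $z_{2n-1}=\infty$ and invoking Lemma~\ref{diff_inf_lemma}, while the case $m_{2n-1}\ge 3$ uses the cofactor expansion, property~(c) of Proposition~\ref{poly_prop} to see that the resulting polynomial is not identically zero, and the same degree-versus-$\hat\nu_n(m)$ count. Your explicit verification that the maximal degree $(n-1)n^{m-1}$ is attained at $j=2n-2$ (using $n\ge 3$) matches the computation in the paper.
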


\begin{proof}[Proof of Lemma~\ref{step1_lemma}]
We notice that the coordinates of the vector $\bfz$ with indexes greater than $j$ do not appear in the matrix $\mcJ_\bfz(j)$, so these coordinates have no effect on the degeneracy or non-degeneracy of the matrix $\mcJ_\bfz(j)$ and hence, can be chosen arbitrarily. Assuming that $\bfz$ is chosen in such a way that $\mcJ_\bfz(j-1)$ is non-degenerate, we will adjust the coordinate $z_j$ so that $\mcJ_\bfz(j)$ would also be non-degenerate.

According to Definition~\ref{minor_def} (definition of $\mcJ_\bfz(j)$) and Proposition~\ref{poly_prop}, if $z_j\neq 0,\infty$, then the $j$-th row of the matrix $\mcJ_\bfz(j)$ has the form
\begin{equation}\label{last_row}
\left( \frac{1}{z_j^{n^{m_j-1}}}P_{n,0,m_j}(z_j), \dots, \frac{1}{z_j^{n^{m_j-1}}}P_{n,j,m_j}(z_j) \right),
\end{equation}
and $z_j$ does not appear in other entries of matrix $\mcJ_\bfz(j)$. 
Assuming that all coordinates of $\bfz$ except $z_j$ are fixed, we can express the determinant $\det\mcJ_\bfz(j)$ as a function of $z_j$ using the cofactor expansion along the $j$-th row. We obtain that 
\begin{equation}\label{det_P}
\det\mcJ_\bfz(j)=\frac{1}{z_j^{n^{m_j-1}}} P(z_j),
\end{equation}
where $P(z)$ is some linear combination of polynomials $P_{n,0,m_j},\dots,P_{n,j,m_j}$. 

Now we consider three different cases:

\underline{Case 1}: If $1\le j\le n-2$ then according to property (a) of Proposition~\ref{poly_prop}, polynomial $P$ is zero if and only if all cofactors of the matrix $\mcJ_\bfz(j)$ along the $j$-th row are equal to zero. However, this does not happen since the matrix $\mcJ_\bfz(j-1)$ is non-degenerate. Hence, polynomial $P$ is not identically zero.

Now using~(\ref{deg_eq}) and~(\ref{nu_k3}) we obtain the following estimates on the degree of $P$:
$$
\deg P\le \max_{0\le i\le j} \deg P_{n,i,m_j}\le n^{m_j}-n^{m_j-1}-1<\hat{\nu}_n(m_j),
$$
where $\hat\nu_n(m_j)$ is the number of bounded non-zero periodic points of the polynomial $f_0(z)=z^n$ with period $m_j$. Since polynomial $P$ is not identically zero, the above estimate implies that $z_j$ can be chosen in such a way that $P(z_j)\neq 0$ and  the matrix $\mcJ_\bfz(j)$ is non-degenerate.

\underline{Case 2}: If $n+1\le j\le 2n-3$, then according to the conditions of Lemma~\ref{step1_lemma}, we have $m_j\ge 2$. Then in the similar way as in \underline{Case 1}, property (b) of Proposition~\ref{poly_prop} implies that polynomial $P$ is not identically zero.
Similarly to \underline{Case 1}, we have
$$
\deg P\le \max_{0\le i\le j,i\neq n-1, n} \deg P_{n,i,m_j} = n^{m_j}-n^{m_j-1}-1<\hat{\nu}_n(m_j),
$$
so $z_j$ can be chosen in such a way that $P(z_j)\neq 0$ and the matrix $\mcJ_\bfz(j)$ is non-degenerate.

\underline{Case 3}: If $j=2n-2$, then polynomial $P$ is not identically zero exactly for the same reasons as in \underline{Case 2}.
Using~(\ref{deg_eq}),~(\ref{nu_k3}) and the fact that $m_{2n-2}\ge 3$, we obtain that
$$
\deg P\le \max_{0\le i\le 2n-2,i\neq n-1, n} \deg P_{n,i,m_{2n-2}} = n^{m_{2n-2}}-n^{m_{2n-2}-1}<\hat{\nu}_n(m_{2n-2}),
$$
so $z_{2n-2}$ can be chosen in such a way that $P(z_{2n-2})\neq 0$ and the matrix $\mcJ_\bfz(2n-2)$ is non-degenerate.
\end{proof}

\begin{proof}[Proof of Lemma~\ref{step2_lemma}]
We will show that we can adjust the last coordinate $z_{2n-1}$ of the vector $\bfz$ so that the Jacobi matrix $\frac{d\Lambda_\bfz}{d\bfa}(0)=\mcJ_\bfz(2n-1)$ is non-degenerate.

First, if $m_{2n-1}=1$, then we can choose $z_{2n-1}=\infty$. Then, according to Lemma~\ref{diff_inf_lemma}, the last row of the matrix $\mcJ_\bfz(2n-1)$ consists of all zeros except the rightmost element that is equal to $-1$. Thus $\det \mcJ_\bfz(2n-1)= -\det\mcJ_\bfz(2n-2)$, and since matrix $\mcJ_\bfz(2n-2)$ is non-degenerate, the matrix $\mcJ_\bfz(2n-1)$ is non-degenerate as well.

Now if $m_{2n-1}\ge 3$, then similarly to the proof of Lemma~\ref{step1_lemma}, the last row of the matrix $\mcJ_\bfz(2n-1)$ has the form~(\ref{last_row}) for $j=2n-1$
and $\det\mcJ_\bfz(2n-1)$ can be expressed in the form~(\ref{det_P}), where $j=2n-1$ and $P$ is a linear combination of polynomials $P_{n,0,m_{2n-1}},\dots, P_{n,2n-1,m_{2n-1}}$. From property (c) of Proposition~\ref{poly_prop} and the fact that $\mcJ_\bfz(2n-2)$ is a non-degenerate matrix, we conclude that polynomial $P$ is not identically zero. Using~(\ref{deg_eq}),~(\ref{nu_k3}) and the conditions that $n\ge 3$ and $m_{2n-1}\ge 3$, we obtain that
\begin{multline*}
\deg P\le \max_{0\le i\le 2n-1,i\neq n-1, n} \deg P_{n,i,m_{2n-1}} = \\
\max(n^{m_{2n-1}}-n^{m_{2n-1}-1},\, 2n^{m_{2n-1}-1}-n^{m_{2n-1}-2})= 
n^{m_{2n-1}}-n^{m_{2n-1}-1} <\hat{\nu}_n(m_{2n-1}),
\end{multline*}
so $z_{2n-1}$ can be chosen in such a way that $P(z_{2n-1})\neq 0$ and the matrix $\mcJ_\bfz(2n-1)$ is non-degenerate.
\end{proof}

\section{Existence of independent multipliers}
In this section we put together the inductive arguments from Section~\ref{induct_sec} in order to prove the following Proposition:

\begin{proposition}\label{exist_th}
Assume that $n\ge 3$ and let $\bfm$ be a vector of periods defined in~(\ref{m_vect}) and satisfying the following conditions:

(i) No more than $n$ different coordinates of $\bfm$ are equal to~$1$.

(ii) At least one of the coordinates of $\bfm$ is greater than~$2$ and all other coordinates are not equal to~$2$.

Then there exists an $\bfm$-periodic vector $\bfz$ of the form~(\ref{z_vect}), such that the Jacobi matrix $\frac{d\Lambda_\bfz}{d\bfa}(0)$ is non-degenerate.
\end{proposition}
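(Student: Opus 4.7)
The plan is to assemble the desired vector $\bfz$ coordinate by coordinate, using Proposition~\ref{base_prop} as the base of an induction and chaining Lemma~\ref{step1_lemma} and Lemma~\ref{step2_lemma} as the inductive step. Since the hypotheses of these two lemmas impose specific inequalities on particular coordinates of $\bfm$ (namely $m_{n+1},\dots,m_{2n-3}\ge 2$, $m_{2n-2}\ge 3$, and $m_{2n-1}\neq 2$), the first task is to reorder $\bfm$ so that these conditions are met.

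For the reordering I would observe that a simultaneous permutation of the coordinates of $\bfm$ and $\bfz$ permutes the rows of $\frac{d\Lambda_\bfz}{d\bfa}(0)$ and hence preserves its non-degeneracy; the same permutation also preserves conditions~(i) and~(ii). Thus I may assume without loss of generality that $m_{n+1},\dots,m_{2n-2}$ are all $\ge 3$ and $m_{2n-1}\neq 2$. Such an arrangement is always achievable: by condition~(ii) each period is either $1$ or $\ge 3$, and by condition~(i) at most $n$ of the $2n-2$ periods equal $1$, leaving at least $n-2$ periods that are $\ge 3$. Since $\{n+1,\dots,2n-2\}$ is a block of exactly $n-2$ positions, the $\ge 3$ periods can fill it. If at most $n-1$ ones occur, place them among $\{0,\dots,n-2\}$; if exactly $n$ ones occur, place $n-1$ of them in $\{0,\dots,n-2\}$ and the last at index $2n-1$. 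In either case $m_{2n-1}\neq 2$.

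With $\bfm$ so normalized, Proposition~\ref{base_prop} supplies an $\bfm$-periodic vector $\bfz$ for which $\mcJ_\bfz(0)$ is non-degenerate. I then apply Lemma~\ref{step1_lemma} successively at $j=1,2,\dots,n-2$, and (using the convention $\mcJ_\bfz(n)=\mcJ_\bfz(n-2)$ to bridge the forbidden indices $n-1,n$) at $j=n+1,n+2,\dots,2n-2$. At each step only the coordinate $z_j$ is modified, and the previously established non-degeneracies are preserved because $\mcJ_\bfz(j')$ does not involve $z_j$ for $j'<j$. Finally Lemma~\ref{step2_lemma}, whose hypothesis $m_{2n-1}\neq 2$ holds by construction, is applied to adjust $z_{2n-1}$ and make the full Jacobi matrix $\frac{d\Lambda_\bfz}{d\bfa}(0)=\mcJ_\bfz(2n-1)$ non-degenerate.

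The crux is the reordering in the second paragraph: conditions~(i) and~(ii) of the Proposition are tailored precisely so that after an appropriate permutation of the indices of $\bfm$ the step lemmas apply in sequence. I expect the only place that needs care is matching the counts in this combinatorial assignment; once that bookkeeping is in place, the rest is a mechanical chain of invocations of Proposition~\ref{base_prop}, Lemma~\ref{step1_lemma}, and Lemma~\ref{step2_lemma} along the natural ordering of indices.
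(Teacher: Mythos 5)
Your proposal is correct and follows essentially the same strategy as the paper: permute the coordinates of $\bfm$ (and $\bfz$) so that the hypotheses of Lemma~\ref{step1_lemma} and Lemma~\ref{step2_lemma} hold, then chain Proposition~\ref{base_prop} with the two step lemmas along the index ordering. Your explicit combinatorial placement (all periods $\ge 3$ in positions $n+1,\dots,2n-2$, a possible $n$-th fixed point at index $2n-1$) is a slightly stronger normalization than the paper's non-decreasing arrangement, but both satisfy the lemmas' hypotheses, and your justification that a simultaneous permutation only permutes rows of the Jacobian is the right way to make the ``without loss of generality'' precise.
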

\begin{proof}
Because of condition~(ii) without loss of generality we may assume that $m_{2n-2}\ge 3$ and $m_{2n-1}\neq 2$. Moreover, if exactly $n$ different coordinates of $\bfm$ are equal to~$1$, then we may assume that $m_{2n-1}=1$. Further we may assume that all other coordinates of $\bfm$ are put in a non-decreasing order. Then condition~(i) implies that $m_j\ge 2$, for all $j$ such that $n+1\le j\le 2n-3$.

Now we finish the proof by induction, where the base of the induction is established by Proposition~\ref{base_prop} and the inductive steps are obtained by applying Lemma~\ref{step1_lemma} and Lemma~\ref{step2_lemma}.
\end{proof}

\begin{proof}[Proof of Theorem~\ref{main2}]
If $2n-2$ periodic orbits satisfy the conditions of Theorem~\ref{main2}, then their periods satisfy the conditions of Proposition~\ref{exist_th}, hence according to Proposition~\ref{exist_th}, there exists a collection of $2n-2$ periodic orbits of the map $f_0$ with the same combination of periods, such that their multipliers are locally independent. Then Theorem~\ref{main2} follows from this fact and Proposition~\ref{suf_cond_prop}.
\end{proof}



\bibliography{rational}
\bibliographystyle{abbrv} 

\textsc{Department of Mathematics, University of Toronto, Room 6290, 40~St. George Street, Toronto, Ontario, Canada M5S~2E4}

\emph{E-mail address}: igors.gorbovickis@utoronto.ca

\end{document}